\documentclass[11pt]{amsart}
\usepackage{amsmath,amsfonts,amsthm,amssymb,amscd,verbatim,amsrefs, hyperref}


\numberwithin{equation}{section}

\newtheorem{Theorem}[equation]{Theorem}
\newtheorem{Conjecture}[equation]{Conjecture}

\newtheorem{Lemma}[equation]{Lemma}

\theoremstyle{remark}
\newtheorem*{Remark}{Remark}

\newtheorem{Question}[equation]{Question}


\newcommand{\abs}[1]{\left\lvert#1\right\rvert}
\newcommand{\col}{\,{:}\,}



\newcommand{\R}{\ensuremath{\mathbb{R}}}
\newcommand{\C}{\ensuremath{\mathbb{C}}}

\newcommand{\F}{\mathbb{F}}

\title[the number of rich lines in high dimensional real vector spaces]{On the number of rich lines in high dimensional real vector spaces}
\author{M\'arton Hablicsek}
\address{
  Department of Mathematics,
  Leiden University,
  Snellius Building, 
  Niels Bohrweg 1, 
  2333 CA Leiden,
  Netherlands
}
\email{hablicsekhm@math.leidenuniv.nl}
\author{Zachary Scherr}
\address{
  Department of Mathematics and Computer Science,
  Fisher Hall
  Susquehanna University,
  514 University Ave.,
  Selinsgrove, Pa. 17870 USA
}
\email{scherr@susqu.edu}
\date{\today}
\begin{document}

\begin{abstract}In this short note we use the Polynomial Ham Sandwich Theorem to strengthen a recent result of Dvir and Gopi about the number of rich lines in high dimensional Euclidean spaces. Our result shows that if there are sufficiently many rich lines incident to a set of points then a large fraction of them must be contained in a hyperplane.\end{abstract}
\maketitle

\section{Introduction}
Let $P$ be a set of points of size $n$ in $\R^d$, and consider a set of lines $L$ in $\R^d$ so that each line in $L$ contains at least $r$ points of $P$.  We investigate the possible size of $L$.

We begin our discussion with the case of $d=2$.  The celebrated result of Szemer\'edi and Trotter (which was generalized to the complex plane by T\'oth \cite{Tot} and Zahl \cite{Zah1}) asserts the following.
 
\begin{Theorem}[\cite{SzeTro}]
Given $P$, a set of points in $\R^2$, and $L$, a set of lines, the number of incidences $I(L,P)$ between $L$ and $P$ satisfies
\[I(L,P)=O(\abs{L}^{2/3}\abs{P}^{2/3}+\abs{L}+\abs{P}).\]
\end{Theorem}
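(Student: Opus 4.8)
The plan is to give Sz\'ekely's crossing-number proof, which I find the cleanest route to the Szemer\'edi--Trotter bound. The main external input is the \emph{crossing number inequality}: any graph $G$ drawn in the plane with $V$ vertices and $E$ edges satisfies $\mathrm{cr}(G) \geq c\, E^3/V^2$ whenever $E \geq 4V$, where $\mathrm{cr}(G)$ is the minimum number of edge crossings over all drawings and $c>0$ is an absolute constant. I would take this inequality as a black box (it follows from Euler's formula together with a random-deletion argument) and reduce the incidence bound to it.

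First I would build a graph $G$ from the incidence structure. Discard any line of $L$ containing fewer than two points of $P$; this changes $I(L,P)$ by at most $\abs{L}$, which is absorbed into the claimed bound. For each remaining line $\ell$, order the points of $P$ lying on $\ell$ along the line and join consecutive points by an edge, realized as the corresponding segment of $\ell$; this gives a concrete drawing of $G$ in the plane. The vertex set is $P$, so $V=\abs{P}$, and a line meeting $P$ in $k_\ell$ points contributes $k_\ell-1$ edges, whence
\[ E = \sum_{\ell \in L} (k_\ell - 1) = I(L,P) - \abs{L}. \]

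Next I would bound the crossings in this drawing. Each edge lies on a unique line of $L$, and two distinct lines meet in at most one point, so every crossing of $G$ arises from an intersection of two distinct lines; hence $\mathrm{cr}(G) \leq \binom{\abs{L}}{2} \leq \abs{L}^2$. If $E \geq 4V$, the crossing number inequality gives $c\,(I - \abs{L})^3/\abs{P}^2 \leq \abs{L}^2$, and solving for $I$ yields $I = O(\abs{P}^{2/3}\abs{L}^{2/3} + \abs{L})$. In the complementary regime $E < 4V$ we have directly $I < \abs{L} + 4\abs{P} = O(\abs{L} + \abs{P})$. Combining the two cases gives the stated estimate.

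The only real obstacle is the crossing number inequality itself; everything else is bookkeeping. I expect that proving it from scratch---first extracting the linear bound $\mathrm{cr}(G) \geq E - 3V + 6$ from Euler's formula, then amplifying it to the cubic bound by deleting each vertex independently with a carefully chosen probability and comparing expectations---would be the technical heart of a fully self-contained argument. An alternative I would keep in mind is the polynomial partitioning method, which is closer in spirit to the tools used elsewhere in this paper, but for the planar case Sz\'ekely's argument is shorter and I would present it first.
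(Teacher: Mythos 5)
This theorem is quoted from Szemer\'edi and Trotter with a citation; the paper supplies no proof of its own, so there is nothing internal to compare your argument against. Your write-up is the standard Sz\'ekely crossing-number proof and it is correct: the graph on $P$ formed by consecutive incidences along each line is simple (two lines share at most one point, so no repeated edges), the given straight-line drawing has at most $\binom{\abs{L}}{2}$ crossings, and the case split on $E\ge 4V$ versus $E<4V$ delivers exactly the claimed bound. The only bookkeeping nit is that after discarding lines with fewer than two points you should write $E=I'-\abs{L'}$ for the surviving lines $L'$ and their incidence count $I'$, with $I\le I'+\abs{L}$; the slack is absorbed into the $O(\abs{L})$ term, as you indicate. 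Your remark that polynomial partitioning gives an alternative proof is apt, since that is the technique (via the Polynomial Ham Sandwich Theorem) that the rest of this paper actually uses, and the authors themselves note that it yields a new proof of Szemer\'edi--Trotter; but for the planar statement in isolation your choice of Sz\'ekely's argument is the shorter route.
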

\medskip

In our case, each line contains at least $r$ points of $P$, therefore, $I(L,P)\geq r\abs{L}$.  Rearranging the terms we obtain that 
\[\abs{L}=O\left(\frac{n^2}{r^3}+\frac{n}{r}\right).\]
This bound is sharp. In a 2-dimensional square grid of $n$ points, for example, each line parallel to one of  the sides of the square contains $O(\sqrt{n})$ points, and there are $O(\sqrt{n})=O\left(\frac{n^2}{(\sqrt{n})^3}\right)$ such lines.

In the higher dimensional case, the $d$-dimensional grid of $n$ points contains $O\left(\frac{n^2}{r^{d+1}}\right)$ lines for $r=o(n^{1/d})$ \cite{SolVu}.  Similar constructions can be given using low dimensional grids as well.  Motivated by these examples, Dvir and Gopi conjectured the following.

\begin{Conjecture}[\cite{DviGop}]\label{DviGopConj}
Let $P$ be a set of $n$ points in $\C^d$ and let $L$ be a set of lines so that each line contains at least $r$ points of $P$. There are constants $K$ and $N$, dependent only on $d$, so that if 
\[\abs{L}\geq K\left(\frac{n^2}{r^{d+1}}+\frac{n}{r}\right),\]
then there exists $1<\ell<d$ and a subset $P'\subseteq P$ of size $N\frac{n}{r^{d-\ell}}$ which is contained in an $\ell$-dimensional affine subspace.
\end{Conjecture}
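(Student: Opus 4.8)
The plan is to argue by induction on the dimension $d$, using the Polynomial Ham Sandwich Theorem to produce a partitioning hypersurface and the Szemer\'edi--Trotter bound as the ultimate base case. The base of the induction is $d=2$, where Szemer\'edi--Trotter already forces $\abs{L}=O(n^2/r^3+n/r)$, so once the constant $K$ exceeds the implied constant the hypothesis $\abs{L}\ge K(n^2/r^{d+1}+n/r)$ cannot be met at all; the content is therefore entirely in how one dimension is peeled off the next. Throughout I will assume $r$ exceeds any fixed power of $d$ (otherwise the term $n/r$ dominates and the statement is handled separately by taking $\ell$ maximal), and I will treat the $n/r$ term, which accounts for pencils of lines through a single point, as a lower-order nuisance absorbed into $K$.

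First I would apply polynomial partitioning to $P$. For a degree parameter $D$ to be fixed later, the Polynomial Ham Sandwich Theorem yields a nonzero $g\in\R[x_1,\dots,x_d]$ with $\deg g\le D$ whose zero set $Z(g)$ cuts $\R^d$ into $O(D^d)$ open cells, each meeting at most $Cn/D^d$ points of $P$. Every line of $L$ then falls into one of two classes: those contained in $Z(g)$, call them $L_{\mathrm{in}}$, and those meeting $Z(g)$ in at most $\deg g\le D$ points, call them $L_{\mathrm{out}}$, each of which therefore enters at most $D+1$ cells.

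Next I would bound $\abs{L_{\mathrm{out}}}$ by a cell-counting argument. A line in $L_{\mathrm{out}}$ keeps at least $r-D\ge r/2$ of its marked points off $Z(g)$, spread over the at most $D+1$ cells it enters, so it concentrates at least $\rho:=r/(2(D+1))$ of them in a single cell. Assigning each such line to one cell in which it concentrates and double counting over cells, the number of lines that are $\rho$-rich inside a cell of at most $Cn/D^d$ points can be controlled by applying the theorem inductively to the smaller point set in that cell; summing the per-cell estimates against $\sum_{\text{cells}}n_{\text{cell}}\le n$ and choosing $D$ to balance the losses should push $\abs{L_{\mathrm{out}}}$ below the threshold. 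This balancing is delicate: a naive summation loses a factor of $D$, and recovering the sharp exponent $r^{d+1}$ rather than $r^{d}$ relies on the dimension drop enjoyed by $L_{\mathrm{in}}$, so the choice of $D$ and the bookkeeping of the induction must be done with care. Granting this, once $\abs{L}$ meets the hypothesis at least half of the lines of $L$ lie on the single degree-$D$ hypersurface $Z(g)$.

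The final and decisive step is to descend from the curved hypersurface $Z(g)$ to a genuine affine subspace. Here I would decompose $Z(g)$ into irreducible components and distribute the lines of $L_{\mathrm{in}}$ among them; a component carrying many more lines than the square of its degree cannot be generic, and by a ruled-variety argument must either be an affine hyperplane or force its rich lines, hence the $\ge r$ points on each, into a subvariety of strictly smaller dimension. Iterating this descent while tracking the surviving point count at each stage is what should produce an $\ell$-flat containing $N\,n/r^{d-\ell}$ points of $P$. I expect this descent to be the main obstacle: the case $\ell=d-1$, where $Z(g)$ itself supplies a hyperplane, is exactly the tractable situation that gives the paper's strengthening of Dvir--Gopi, whereas controlling the loss in the point count uniformly across all intermediate $\ell$---so as to land on the sharp quantity $n/r^{d-\ell}$ rather than something weaker---is precisely where the full conjecture resists a clean polynomial-method proof.
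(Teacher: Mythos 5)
The statement you are addressing is a conjecture of Dvir and Gopi stated over $\C^d$, and the paper itself does not prove it: it proves only real analogues (Theorems \ref{thm:lines} and \ref{thm:main}), and even those conclude with a single hyperplane rather than the full range of intermediate flats, while the complex and finite-field cases are explicitly left open. Your outline has the right general flavor (polynomial partitioning, then a structural analysis of the lines trapped in the partitioning hypersurface), but it contains genuine gaps beyond the two you flag yourself. First, the field is wrong: the Polynomial Ham Sandwich Theorem is a real, topological tool, and the paper is explicit that its method ``only works over $\R$,'' so nothing in your argument touches $\C^d$. Second, your bound on $\abs{L_{\mathrm{out}}}$ is miscast. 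You propose to control the $\rho$-rich lines inside each cell by ``applying the theorem inductively to the smaller point set in that cell,'' but the conjecture is not an upper bound on the number of rich lines --- it is a dichotomy (either few lines, or a rich flat exists) --- so the inductive hypothesis supplies no numerical estimate to sum over cells; moreover the points of a cell still sit in $\R^d$, so no dimension has been peeled off. The paper needs no induction here: a line not contained in $Z(f)$ meets at most $m=\deg f$ cells by B\'ezout, so by Cauchy--Schwarz any line with two cell-mates accounts for at least roughly $r^2/(16m)$ pairs of points lying in a common cell, while the total number of such pairs is $O\left(m^d\cdot(n/m^d)^2\right)$; taking $m\asymp r/8$ gives $\abs{L_{cell}}=O(n^2/r^{d+1})$ directly, and the remaining lines are forced into $Z(f)$ by a second application of B\'ezout.

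Third, and decisively, the step you yourself call ``the main obstacle'' --- descending from $Z(g)$ to an affine $\ell$-flat containing $N n/r^{d-\ell}$ points --- is precisely the open content of the conjecture, and your ruled-variety sketch does not supply it. What the paper actually does at this stage is far more modest: it passes to the lines $L_Z$ contained in the hypersurface, prunes with Lemma \ref{lem:ave} so that every surviving point lies on at least $n/r^{d}$ lines, takes a nonzero polynomial $g$ of minimal degree vanishing on $L_Z$, and observes that not every point can be a joint (otherwise a partial derivative of $g$, of strictly smaller degree, would vanish on all of $P_Z$ and hence, by B\'ezout, on all of $L_Z$, contradicting minimality); a non-joint point then yields one hyperplane containing $\Omega(n/r^{d-1})$ points. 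That settles only the hyperplane case, with a point count that does not even match the conjecture's $N n/r^{d-\ell}$ for any admissible $\ell>1$; the intermediate-dimensional cases remain open for everyone. As written, your proposal is a plan whose two load-bearing steps are deferred, so it cannot be accepted as a proof of the stated conjecture.
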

\medskip

In their paper \cite{DviGop}, Dvir and Gopi show a weaker version of the conjecture.

\begin{Theorem}[\cite{DviGop}]
\label{thm:dvi}
Let $P$ be a set of $n$ points in $\C^d$ and let $L$ be a set of lines so that each line contains at least $r$ points of $P$. There are constants $K$ and $N$, dependent only on $d$, so that if
\[\abs{L}\geq K\frac{n^2}{r^d}\]
then there exists a subset of $P$ of size $N\frac{n}{r^{d-2}}$ contained in a $(d-1)$-dimensional hyperplane.
\end{Theorem}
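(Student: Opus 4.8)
The plan is to argue by contraposition through the polynomial method, reducing the problem to counting rich lines contained in a low-degree variety, and to induct on the dimension $d$ with the Szemer\'edi--Trotter bound (Theorem 1.1, valid over $\mathbb{C}$ by T\'oth and Zahl) as the base case $d=2$. So assume that no hyperplane contains more than $N\frac{n}{r^{d-2}}$ points of $P$; I will show $|L| \lesssim n^2/r^d$, which contradicts the hypothesis once $K$ is chosen large. Everything will be driven by a single auxiliary hypersurface $Z = Z(Q)$ attached to $P$, together with a cell decomposition of its complement.

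First I would produce $Q$. When $r \gtrsim n^{1/d}$, a dimension count yields a nonzero $Q \in \mathbb{C}[x_1,\dots,x_d]$ of degree $D < r$ vanishing on all of $P$; since each line of $L$ carries $\geq r > D$ points and $Q$ restricts to a univariate polynomial of degree $\leq D$ on any line, every line of $L$ already lies in $Z$. When $r \lesssim n^{1/d}$, I would instead invoke polynomial partitioning at degree $D \sim r$ (working in $\mathbb{R}^{2d} \cong \mathbb{C}^d$, with the attendant care about complex lines), splitting $L$ into lines contained in $Z$ and transversal lines. A transversal line meets $Z$ in at most $D$ points by B\'ezout, so it keeps $\geq r-D \gtrsim r$ of its points inside the $\lesssim D^d$ cells and is therefore $\gtrsim r/D$-rich in some cell of $\lesssim n/D^d$ points; applying Theorem 1.1 cell-by-cell and summing bounds the transversal lines by $\lesssim n^2 D^{3-d}/r^3 + nD/r$, which for $D\sim r$ in this regime is $\lesssim n^2/r^d + n \lesssim n^2/r^d$. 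In either regime it thus remains to bound the rich lines \emph{contained} in $Z$.

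For this I would decompose $Z$ into irreducible components, whose degrees sum to $\leq D$, and treat each according to whether it is a hyperplane. A hyperplane component $H$ is a copy of $\mathbb{C}^{d-1}$: if it contains $\geq N\frac{n}{r^{d-2}}$ points we are done, and otherwise the rich lines inside $H$ form a point--line configuration in dimension $d-1$ on fewer than $N\frac{n}{r^{d-2}}$ points, to which the inductive hypothesis applies and returns either a lower-dimensional rich flat (which I would lift back to a rich hyperplane of $\mathbb{C}^d$) or a count safely below $n^2/r^d$. For a component $X$ of degree $e \geq 2$ I would restrict $P$ to $X$ and recurse, inducting simultaneously on the dimension and on the degree, so that positive-degree pieces are peeled off until only flats remain.

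The crux, and where I expect the real difficulty, is precisely these positive-degree components. One is tempted to bound the contained rich lines by arguing that few lines of $X$ pass through each point, but this already fails for objects such as smooth quadrics, through whose points a \emph{positive-dimensional} family of contained lines can pass, so a naive per-point incidence bound overcounts. Making the step work requires genuine structure theory for varieties ruled by lines: passing to generic hyperplane sections to drop the dimension, controlling how the ruling lines meet the finite set $P$, and carefully propagating the exponents so that the double induction on dimension and degree terminates with exactly the bound $|L| \lesssim n^2/r^d$ and a hyperplane carrying $\geq N\frac{n}{r^{d-2}}$ points. I also anticipate that the interface between the complex setting, where ``a line not contained in a degree-$D$ hypersurface meets it in at most $D$ points'' is immediate, and the real partitioning machinery will demand some bookkeeping, since complex lines are real $2$-planes.
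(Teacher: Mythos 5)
This statement is quoted from Dvir and Gopi \cite{DviGop} and is not proved in the present paper; their argument is a linear-algebraic one based on rank bounds for design matrices, which works over $\C$ precisely because it avoids topological partitioning. Measured against what a correct proof would require, your outline has two genuine gaps. First, the step you yourself flag as the crux --- bounding the rich lines contained in an irreducible component of $Z$ of degree $e\geq 2$ --- is not an incidental difficulty to be smoothed over but the entire content of the theorem in this approach; without a structure theorem for line-ruled varieties (or some substitute, such as the design-matrix machinery of \cite{DviGop} or the flecnode-type arguments used elsewhere), the induction never closes, and no amount of ``peeling off positive-degree pieces'' is justified by what you have written.

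Second, and more concretely fatal, the partitioning regime $r\lesssim n^{1/d}$ does not work as stated over $\C$. The Polynomial Ham Sandwich Theorem applied in $\R^{2d}\cong\C^d$ produces a \emph{real} polynomial $Q$ of degree $D$, and a complex line is a real $2$-plane: if it is not contained in the real hypersurface $\{Q=0\}$, its intersection with that hypersurface is a real algebraic curve of degree at most $D$ in the plane, which can contain arbitrarily many points of $P$. So the claim ``a transversal line meets $Z$ in at most $D$ points by B\'ezout'' is false in this setting, and with it the estimate that transversal lines retain $\gtrsim r$ points inside the cells. (The cell count is also off: the decomposition of $\R^{2d}$ has $O(D^{2d})$ cells with $O(n/D^{2d})$ points each, and a $2$-plane can cross $\Omega(D^2)$ of them.) This is exactly the obstruction the authors point to when they restrict Theorem \ref{thm:lines} to $\R$ and pose the complex case as an open question; Zahl's paper \cite{Zah2} gets around it only by using a genuinely complex version of the partitioning theorem. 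As it stands, your proposal reduces the theorem to two unproved claims, one of which is the theorem's main difficulty and the other of which is false as formulated.
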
  
\medskip

Their proof involves a clever use of design matrices in order to show that almost
all the lines lie in a low degree hypersurface (the degree needs to be less than $r$).  In our paper, we prove a stronger version of Theorem~\ref{thm:dvi} but over $\R$ rather than over $\C$.  The strategy of our
proof is similar to that of Dvir and Gopi, except working over $\R$ allows us to use the Polynomial Ham Sandwich Theorem (Theorem~\ref{thm:phs}) in place of design matrices.

\section{Main results}

Our main result shows that if there are too many $r$-rich lines then most of the lines must lie in a low degree hypersurface.

\begin{Theorem}
\label{thm:lines}
Let $P$ be a set of $n$ points in $\R^d$ and let $L$ be a set of lines so that each line contains at least $r$ points of $P$. There is a constant $K$, dependent only on $d$, so that if 
\[\abs{L}\geq K\frac{n^2}{r^{d+1}},\]
then there exists a hypersurface of degree at most $\frac{r}{4}$ containing at least $4\frac{n^2}{r^{d+1}}$ lines of $L$.
\end{Theorem}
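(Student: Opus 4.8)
The plan is to apply the Polynomial Ham Sandwich Theorem (Theorem~\ref{thm:phs}) to the point set $P$ with a degree parameter $D$ chosen to be a fixed fraction of $r$, say $D=\lfloor r/4\rfloor$, producing a nonzero polynomial $g$ with $\deg g\le D\le r/4$ whose zero set $Z(g)$ partitions $\R^d$ into $O(D^d)$ cells, each meeting at most $O(n/D^d)$ points of $P$. The hypersurface we produce will be $Z(g)$ itself, so it suffices to exhibit at least $4\frac{n^2}{r^{d+1}}$ lines of $L$ lying inside $Z(g)$. I would split $L$ into the lines $L_0$ contained in $Z(g)$ and the lines $L_1$ not contained in $Z(g)$, and bound $\abs{L_1}$ from above by a quantity of order $\frac{n^2}{r^{d+1}}$; taking the constant $K$ large compared with this implied constant then forces $\abs{L_0}=\abs{L}-\abs{L_1}\ge 4\frac{n^2}{r^{d+1}}$.

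The mechanism for controlling $L_1$ is that an escaping line cannot be rich cheaply. If $\ell\in L_1$, then $g$ restricts to a nonzero univariate polynomial of degree at most $D$ on $\ell$, so $\ell$ meets $Z(g)$ in at most $D$ points and is cut into at most $D+1$ subsegments, each lying in a single cell. Since $\ell$ carries at least $r$ points of $P$ and at most $D\le r/4$ of them sit on $Z(g)$, at least $3r/4$ of its points are distributed among at most $D+1$ cells; by pigeonhole some single cell contains at least $t:=\tfrac{3r/4}{D+1}$ of them. Thus every line of $L_1$ is $t$-rich inside one of the cells. I would then estimate, cell by cell, the number of lines that are $t$-rich in a set of at most $m=O(n/D^d)$ points---crudely by the pair count $\binom{m}{2}/\binom{t}{2}$, or, to save a factor, by the Szemer\'edi--Trotter bound $O(m^2/t^3+m/t)$ after a generic projection to the plane---and sum over the $O(D^d)$ cells to get an upper bound for $\abs{L_1}$.

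The main obstacle is exactly this final count: one must make the escaping lines genuinely rare, of order at most $\frac{n^2}{r^{d+1}}$, while respecting $\deg g\le r/4$. The tension is that forcing each escaping rich line to pile many points into one low-occupancy cell wants the cells small, i.e.\ the degree close to $n^{1/d}$, whereas the budget caps $D$ at $r/4$ and so keeps the concentration parameter $t$ only a small constant; reconciling the two is where the choice of partition degree and of $K$ must be calibrated with care, and it is the step I would allot the most effort to. In the favorable range where $r$ is already large enough that a nonzero polynomial of degree at most $r/4$ can be forced to vanish on \emph{all} of $P$ (which happens once $\binom{\lfloor r/4\rfloor+d}{d}>n$), the argument collapses to its cleanest form: every $r$-rich line then meets $Z(g)$ in more than $\deg g$ points and hence lies in $Z(g)$, so $L_0=L$ and the conclusion is immediate. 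I expect the substance of the proof to lie in pushing this conclusion down to smaller $r$ via the cell-counting above.
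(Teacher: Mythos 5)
Your setup coincides with the paper's: partition $\R^d$ by a polynomial of degree about $r/4$ via Theorem~\ref{thm:phs}, observe that any line of $L$ not contained in the zero set has at most $D$ points on it and hence at least $r-D\ge 3r/4$ points inside the open cells, and reduce the theorem to showing that these ``escaping'' lines number only $O(n^2/r^{d+1})$. The gap is in the final count, exactly where you say you would allot the most effort, and the tension you identify is real and is not resolved by your proposed estimates. With $D\approx r/4$ the pigeonhole concentration $t=\tfrac{3r/4}{D+1}$ is only about $3$, so counting, cell by cell, the lines that are $t$-rich in a \emph{single} cell --- whether by the pair bound $\binom{m}{2}/\binom{t}{2}$ or by Szemer\'edi--Trotter --- yields at best $O(D^d)\cdot O\bigl((n/D^d)^2\bigr)=O(n^2/r^d)$, a full factor of $r$ short of the theorem (it recovers only the Dvir--Gopi exponent). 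Recalibrating does not rescue this: lowering $D$ raises $t$ but enlarges the cells, and for $d\ge 3$ no choice of $D\le r/4$ makes the single-rich-cell count come out to $O(n^2/r^{d+1})$.

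The missing idea, which is the heart of the paper's proof, is to aggregate over \emph{all} the cells a line traverses rather than isolating one rich cell. Count each escaping line $\ell$ with multiplicity $k_\ell=\sum_i\binom{\abs{\ell\cap P_i}}{2}$, the total number of same-cell pairs of points of $P$ on $\ell$. Since $\ell$ meets at most $D+1\approx r/4$ cells by B\'ezout but carries at least $3r/4$ points inside cells, Cauchy--Schwarz (equivalently, convexity of $x\mapsto\binom{x}{2}$) forces $k_\ell=\Omega(r)$: roughly $r/4$ occupied cells holding on average $3$ points apiece contribute $\Omega(1)$ pairs each. Since each pair of points lies on at most one line, the global budget of same-cell pairs is $\sum_i\binom{\abs{P_i}}{2}=O\bigl(D^d\cdot(n/D^d)^2\bigr)=O(n^2/r^d)$, and dividing by the $\Omega(r)$ multiplicity per line gives the required $O(n^2/r^{d+1})$ bound on the escaping lines; choosing $K$ large then finishes the argument exactly as you outline. (Your closing observation --- that once $\binom{\lfloor r/4\rfloor+d}{d}>n$ one can force a degree-$\le r/4$ polynomial to vanish on all of $P$ and conclude immediately --- is correct, but it only covers $r\gtrsim n^{1/d}$, which is not the substantive range.)
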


\begin{Remark} 
One can interpret the theorem above as follows.  If a set of points is such that there exist a lot of non-generic large subsets, then a large fraction of the points must be non-generic.  In our case we know that there are $K\frac{n^2}{r^{d+1}}$ non-generic subsets of size $r$, and we deduce that a large fraction of points lie in a low degree hypersurface.
\end{Remark}

As an easy consequence of Theorem \ref{thm:lines} we obtain a better bound over $\R$ than the bound in Theorem \ref{thm:dvi}.

\begin{Theorem}
\label{thm:main}
Let $P$ be a set of $n$ points in $\R^d$ and let $L$ be a set of lines so that each line contains at least $r$ points of $P$. There are constants $K$ and $N$, dependent only on $d$, so that if 
\[\abs{L}\geq K\frac{n^2}{r^{d+1}},\]
then there exists a hyperplane containing $N\frac{n}{r^{d-1}}$ points of $P$.
\end{Theorem}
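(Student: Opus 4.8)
The plan is to deduce Theorem~\ref{thm:main} from Theorem~\ref{thm:lines} by a dimension-reduction argument. Theorem~\ref{thm:lines} hands me a hypersurface $Z(f)$ of degree $\delta\le r/4$ containing at least $4\tfrac{n^2}{r^{d+1}}$ lines of $L$. The crucial observation is that each such line meets $P$ in at least $r>\delta$ points, so the restriction of $f$ to such a line is a single-variable polynomial of degree $\le\delta$ with more than $\delta$ zeros, forcing $f$ to vanish identically on the line. Thus every one of these lines lies entirely inside $Z(f)$, and I may harvest all the points of $P$ lying on these lines as points of $Z(f)$.

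**Next I would** run an induction (or a repeated application of Theorem~\ref{thm:lines}) on the dimension $d$. The base case $d=1$ is vacuous and $d=2$ is the classical Szemer\'edi--Trotter bound already quoted. For the inductive step, I restrict attention to the points of $P$ lying on the lines inside $Z(f)$; call this set $P'$. Since we have $m\ge 4\tfrac{n^2}{r^{d+1}}$ lines each carrying at least $r$ points, a counting/pigeonhole argument (being careful about points shared by many lines) shows $|P'|$ is large, and these points live on the hypersurface $Z(f)$. The idea is then to decompose $Z(f)$ into its irreducible components and pass to one component containing many of the lines; on a single irreducible component of dimension $d-1$ I would like to invoke the theorem inductively in one lower dimension, eventually landing inside a genuine hyperplane (an affine subspace of dimension $d-1$) that captures $N\tfrac{n}{r^{d-1}}$ points of $P$.

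**The main obstacle** will be the geometry of the hypersurface $Z(f)$: it need not be a hyperplane, it may be reducible, and its components can have complicated incidence structure with the lines of $L$. Controlling this requires a clean way to either (i) cut down to a flat of one lower dimension carrying a definite fraction of the lines, or (ii) project the configuration into $\R^{d-1}$ so that rich lines remain rich and the line count is not destroyed by collapsing distinct lines onto one another. Choosing a generic projection direction should keep almost all lines distinct and preserve $r$-richness up to constants, so that the hypothesis $|L|\ge K\tfrac{n^2}{r^{d+1}}$ survives into dimension $d-1$ with an adjusted constant $K$. Tracking how the constants $K$ and $N$ degrade across the $d$ induction steps, while ensuring the richness threshold $r$ stays fixed, is the delicate bookkeeping I expect to occupy most of the argument.
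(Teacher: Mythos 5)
Your first step coincides with the paper's: apply Theorem~\ref{thm:lines} to obtain a low-degree hypersurface $Z(f)$ containing at least $4\frac{n^2}{r^{d+1}}$ lines of $L$ (and indeed any such line, having $r > \deg f$ points where $f$ vanishes, lies entirely in $Z(f)$). But the dimension-reduction you propose after that has a genuine gap that I do not see how to repair. The threshold in the theorem is $K\frac{n^2}{r^{d+1}}$, with the exponent of $r$ tied to the ambient dimension; in $\R^{d-1}$ the hypothesis you would need is $\abs{L} \ge K\frac{n^2}{r^{d}}$, which is stronger by a full factor of $r$ than what you have. A generic projection to $\R^{d-1}$ preserves the number of lines and their richness but does not gain you that factor of $r$, so the inductive hypothesis simply is not satisfied after projection --- this is not a matter of ``adjusting the constant $K$.'' Likewise, passing to an irreducible component of $Z(f)$ does not by itself produce a hyperplane (the component is in general a curved $(d-1)$-dimensional variety of degree greater than one), and the theorem you want to invoke inductively is a statement about points and lines in $\R^{d-1}$, not about configurations lying on a variety inside $\R^d$.

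The paper closes the argument quite differently, with a joints-type argument rather than induction on dimension. After applying Lemma~\ref{lem:ave} to prune the incidence graph so that every surviving point of $P_Z$ lies on at least $\frac{n}{r^d}$ lines of $L_Z$ (while every surviving line still carries $\Omega(r)$ points), one takes $g$ of minimal degree vanishing on $L_Z$; if every point of $P_Z$ were a joint, a component of $\nabla g$ not vanishing identically on $Z(g)$ would vanish on all of $P_Z$ and hence, by B\'ezout, on every line of $L_Z$, contradicting minimality of $\deg g$. So some point $p$ is not a joint, all of the at least $\frac{n}{r^d}$ lines of $L_Z$ through $p$ lie in a common hyperplane, and counting the $r-1$ further points on each of these lines (distinct lines through $p$ share only $p$) yields $\Omega\left(\frac{n}{r^{d-1}}\right)$ points in that hyperplane. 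This non-joint step is the idea missing from your outline: it is what converts ``many lines on a low-degree hypersurface'' into ``many points on an actual hyperplane.''
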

\medskip

The main technique in our proof is the Polynomial Ham Sandwich Theorem which we state below.

\begin{Theorem}[Polynomial Ham Sandwich]
\label{thm:phs}
Let $S$ be a finite set of points in $\R^d$, and let $m \geq 1$. Then there exists a non-trivial polynomial $f$ of degree $m$ and a decomposition of $\{ x \in \R^d\col f(x) \neq 0 \}$ into at most $O(m^d)$ cells each of which is an open set with boundary in $\{ x \in \R^d\col f(x) = 0 \}$, and each of which contains at most $O\left(\frac{\abs{S}}{m^d}\right)$ points of $S$.
\end{Theorem}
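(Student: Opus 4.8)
The plan is to deduce this partitioning statement from the classical (linear) Ham Sandwich Theorem, which I take as given, by a Veronese linearization followed by an iterated bisection. The first ingredient I would record is a \emph{polynomial bisection lemma}: given finite sets $S_1,\dots,S_k\subset\R^d$, there is a non-trivial polynomial $g$ of degree $O(k^{1/d})$ whose zero set simultaneously bisects all of them, in the sense that each of the open regions $\{g>0\}$ and $\{g<0\}$ contains at most $\abs{S_i}/2$ points of each $S_i$. To prove this I would use the degree-$\le D$ Veronese map $x\mapsto (x^\alpha)_{1\le\abs{\alpha}\le D}$ into $\R^M$ with $M=\binom{D+d}{d}-1$, under which the zero set of a degree-$\le D$ polynomial is the pullback of an affine hyperplane of $\R^M$. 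Applying the classical Ham Sandwich Theorem to the $k$ image sets bisects them by a hyperplane as soon as $M\ge k$, i.e. as soon as $\binom{D+d}{d}-1\ge k$; since $\binom{D+d}{d}\asymp D^d$ this holds for some $D=O(k^{1/d})$.

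Next I would iterate the bisection in dyadic stages, building $f$ as a product. At stage $j$ I will maintain the invariant that the sign-pattern regions of the polynomials $g_1,\dots,g_j$ produced so far (the loci on which each $g_i$ has a fixed nonzero sign), of which there are at most $2^j$, each contain at most $\abs{S}/2^j$ points of $S$. Given this, the $\le 2^j$ point sets cut out by these regions can all be bisected simultaneously by a single $g_{j+1}$ of degree $D_{j+1}=O(2^{j/d})$ from the bisection lemma, and multiplying by $g_{j+1}$ halves the point count in every region, restoring the invariant at stage $j+1$. The crucial bookkeeping is that the total degree after $J$ stages is $\sum_{j=1}^{J}D_j = O\!\left(\sum_{j} 2^{(j-1)/d}\right)=O(2^{J/d})$, a geometric-type sum (ratio $2^{1/d}>1$) dominated by its last term, with implied constant depending on $d$; I would then choose $J$ as large as possible subject to this total being at most $m$, which forces $2^{J}\asymp m^{d}$.

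Finally I would set $f=\prod_{j=1}^{J}g_j$, a non-trivial polynomial of degree at most $m$. Every connected component of $\{f\neq 0\}$ lies in a single sign-pattern region of the $g_j$ (since on such a component no $g_j$ vanishes), so by the invariant it contains at most $\abs{S}/2^J=O(\abs{S}/m^d)$ points of $S$, which is the desired point bound. For the bound on the number of cells I would invoke the Milnor--Thom (Oleinik--Petrovsky) estimate, which bounds the number of connected components of the complement of a real hypersurface of degree $\le m$ in $\R^d$ by $O(m^d)$. The main obstacle I anticipate is not any single computation but making the two quantitative budgets balance at once: the degrees $D_j$ must grow fast enough that only $O(\log m)$ stages suffice to drive the per-region count down to $O(\abs{S}/m^d)$, yet slowly enough that their sum stays $O(m)$. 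This is exactly what the choice $D_j=O(2^{j/d})$ achieves, and it is the point at which the exponent $d$ of the dimension must enter consistently on both sides.
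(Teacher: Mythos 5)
Your proposal is correct and is precisely the standard proof of the polynomial partitioning theorem from \cite{GutKat} (also expounded in \cite{KapMatSha}): discrete ham sandwich through the Veronese embedding to get the simultaneous bisection lemma, dyadic iteration with degrees $D_j=O(2^{j/d})$ whose geometric sum is dominated by the last term, and a Milnor--Thom type bound for the cell count. The paper itself states Theorem~\ref{thm:phs} without proof, citing \cite{GutKat}, so your argument matches the proof in the cited source essentially step for step.
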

\medskip

This poweful tool was invented by Guth and Katz in \cite{GutKat} to give a nearly complete solution to the Erd\H{o}s distinct distance problem and has been applied, for instance, to give a new proof of the Szemer\'edi-Trotter theorem, the Pach-Sharir theorem \cite{PacSha} (see \cite{KapMatSha} for more details) and some variants of the joints problem \cite{Ili}.

We remark that the Polynomial Ham Sandwich Theorem relies on the topology of $\R$, and thus our proof only works over $\R$.  On the other hand, we believe that Theorem \ref{thm:lines} holds over any prime field $\F_p$ and over $\C$ as well, and hence it would be nice to see a proof of Theorem \ref{thm:lines} which does not use the Polynomial Ham Sandwich Theorem.

\begin{Question}
Let $P$ be a set of $n$ points in $k^d$, where $k$ is either a prime field $\F_p$ or the field of complex numbers. Let $L$ be a set of lines so that each line contains at least $r$ points of $P$. Is there a constant $K$, dependent only on $d$, so that if 
\[\abs{L}\geq K\frac{n^2}{r^{d+1}},\]
then there exists a hypersurface of degree at most $\frac{r}{4}$ containing at least $4\frac{n^2}{r^{d+1}}$ lines of $L$?
\end{Question}

We remark that recently in \cite{Zah2}, Zahl proved a slightly weaker version of Theorem \ref{thm:main} over $\C$ using a version of the Polynomial Ham Sandwich Theorem over $\C$ (see \cite{Tot} or \cite{Zah1}).

\section{Proof of the main theorems}

In this section we prove Theorems \ref{thm:lines} and \ref{thm:main}.  We begin with the proof of Theorem \ref{thm:lines} which we restate below.

\begin{Theorem}
Let $P$ be a set of $n$ points in $\R^d$, and let $L$ be a set of lines so that each line contains at least $r$ points of $P$. There is a constant $K$, dependent only on $d$, so that if 
\[\abs{L}\geq K\frac{n^2}{r^{d+1}},\]
then there exists a hypersurface of degree at most $\frac{r}{4}$ containing at least $4\frac{n^2}{r^{d+1}}$ lines of $L$.
\end{Theorem}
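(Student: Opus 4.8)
The plan is to apply the Polynomial Ham Sandwich Theorem (Theorem~\ref{thm:phs}) directly to the point set $P$ with degree parameter $m=\lfloor r/4\rfloor$, and then to argue that all but $O(n^2/r^{d+1})$ of the lines in $L$ must lie on the resulting zero set. Concretely, Theorem~\ref{thm:phs} produces a non-trivial polynomial $f$ of degree $m$ whose vanishing locus $Z(f)$ decomposes $\R^d$ into $O(m^d)$ open cells, each containing at most $O(n/m^d)$ points of $P$. Since $Z(f)$ has degree $m=\lfloor r/4\rfloor\leq r/4$, it will serve as the claimed hypersurface, and the entire task reduces to bounding the number of lines of $L$ that are \emph{not} contained in $Z(f)$.

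First I would split $L$ into the lines contained in $Z(f)$ and those that are not. For a line $\ell$ not contained in $Z(f)$, the restriction $f|_\ell$ is a non-zero univariate polynomial of degree at most $m$, so $\ell$ meets $Z(f)$ in at most $m$ points; hence at most $m$ of the $\geq r$ points of $P$ on $\ell$ can lie on $Z(f)$, leaving at least $r-m\geq 3r/4$ of them inside the cells. Moreover those at most $m$ intersection points cut $\ell$ into at most $m+1$ connected arcs, each contained in a single cell, so the surviving $\geq 3r/4$ points of $\ell$ are spread among at most $m+1$ cells.

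The heart of the argument is a double count of the pairs of points of $P$ that are collinear on a line of $L$ and share a common cell. On one hand, since each cell holds $O(n/m^d)$ points and there are $O(m^d)$ cells, the total number of pairs of points sharing a cell is at most $O(m^d)\cdot O(n^2/m^{2d})=O(n^2/m^d)$. On the other hand, each line $\ell$ not contained in $Z(f)$ contributes $\sum_i\binom{a_i}{2}$ such pairs, where $a_i$ is the number of points of $\ell$ in the $i$-th cell it meets; writing $S=\sum_i a_i\geq 3r/4$ over at most $m+1\leq r/4+1$ cells, convexity gives $\sum_i\binom{a_i}{2}\geq\tfrac12\bigl(S^2/(m+1)-S\bigr)$, which is $\Omega(r)$. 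Because two distinct points lie on a unique line, distinct lines contribute disjoint sets of pairs, so the number of lines not in $Z(f)$ is at most $O(n^2/m^d)/\Omega(r)=O(n^2/r^{d+1})$, say at most $C\,n^2/r^{d+1}$ for some $C=C(d)$.

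To finish, I would take the constant $K$ in the hypothesis to be larger than $C+4$. Then $\abs{L}\geq K\,n^2/r^{d+1}$ forces at least $\abs{L}-C\,n^2/r^{d+1}\geq 4\,n^2/r^{d+1}$ lines of $L$ to lie on $Z(f)$, which is exactly the desired conclusion. I expect the main obstacle to be the middle double-counting step: one must check that the convexity lower bound on $\sum_i\binom{a_i}{2}$ really beats a constant multiple of $r$ uniformly, and carefully track the implicit $4^d$-type factors coming from $m=\lfloor r/4\rfloor$ so that the final constant $C$, and hence $K$, depends only on $d$. A minor separate point is to handle small $r$, where the floor in $m=\lfloor r/4\rfloor$ may require adjusting the numerical constants.
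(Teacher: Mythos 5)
Your proposal is correct and follows essentially the same route as the paper: a Polynomial Ham Sandwich partition at degree $m\approx r/4$, B\'ezout to force lines with few same-cell pairs into the zero set, and a convexity/Cauchy--Schwarz double count of same-cell collinear pairs to bound the remaining lines by $O(n^2/r^{d+1})$. The only differences are cosmetic (you bound all lines not in $Z(f)$ directly, while the paper first isolates the set $L_{cell}$ of lines with two points in a common cell), so there is nothing substantive to add.
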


\begin{proof}
Assume that $\abs{L}=K\frac{n^2}{r^{d+1}}$ for a large constant $K$ (which will be chosen in the end of the proof) and
fix a positive integer $m$ in the range $\frac{r}{8} < m < \frac{r}{4}$ (the interesting case of the theorem is when $r$ is large).
Using the Polynomial Ham Sandwich Theorem (Theorem \ref{thm:phs}), we can find a polynomial $f$ of degree $m$ partitioning
$\R^d$ into the zero locus of $f$ as well as $M=O(m^d)$ open cells
\[\R^d = \{x\col f(x)=0\}\cup C_1\cup C_2\cup\ldots\cup C_M\]
so that each cell contains at most $O\left(\frac{n}{m^d}\right)$ points of $P$ and has boundary in the zero set of $f$.  We denote $P_i:=C_i\cap P$.

Let
\[L_{cell} = \{\ell\in L\col\exists i\text{ with }\abs{\ell\cap P_i}\ge 2\}.\]
Since the zero locus of $f$ forms the boundary of the union of the cells, B\'ezout's theorem guarantees that every line in $\R^d$ intersects at most $m$ cells.  If $\ell\in L\backslash L_{cell}$, then $\abs{\ell\cap P_i}\le 1$ for each $i$, so in particular
\begin{equation}\label{estimate1}
\abs{\bigcup _{i=1}^M \ell\cap P_i}=\sum_{i=1}^M\abs{\ell\cap P_i} \le m < \frac{r}{2}.
\end{equation}
By assumption, every line in $L$ contains $r$ points of $P$ so lines in $L\backslash L_{cell}$ must contain at least $\frac{r}{2} > m$ in the zero locus of $f$. We can again invoke B\'ezout to conclude that such a line is necessarily contained in the zero locus of $f$.  Since what we are after is a lower bound on the number of lines in $L$ which are contained in the zero locus of $f$, this discussion shows that it suffices to give an upper bound on the size of $L_{cell}$.

To do so, we take advantage of the fact that every line $\ell\in L_{cell}$ has the property that $\abs{\ell\cap P_i}\ge 2$ for some $i$.  The total number of lines, counted with multiplicity, in $\R^d$ which intersect some $P_i$ in at least two points is
\begin{equation}\label{estimate0}
\sum_{i=1}^M\binom{\abs{P_i}}{2}\end{equation}
where each such line $\ell$ is counted with multiplicity
\[k_\ell:=\sum_{i=1}^M\binom{\abs{\ell\cap P_i}}{2} =\frac{1}{2}\left(\sum_{i=1}^M \abs{\ell\cap P_i}^2-\sum_{i=1}^M \abs{\ell\cap P_i}\right).\]
We have already observed that a line not contained in the zero locus of $f$ can only intersect at most $m$ cells.  If
\[a_i=\begin{cases}
0, & \ell\cap P_i=\emptyset\\
1, & \text{otherwise}\end{cases}\]
then this observation, combined with the the Cauchy-Schwarz inequality, gives
\begin{eqnarray*}
\left(\sum_{i=1}^M \abs{\ell\cap P_i}\right)^2&=&
\left(\sum_{i=1}^M a_i\abs{\ell\cap P_i}\right)^2\\
&\le& \sum_{i=1}^M a_i^2\cdot\sum_{i=1}^M \abs{\ell\cap P_i}^2\\
&\le& m\sum_{i=1}^M \abs{\ell\cap P_i}^2.\end{eqnarray*}
Therefore we get a lower bound
\begin{equation}\label{estimate2}
k_\ell \ge \frac{1}{2}\left(\frac{\left(\sum_{i=1}^M \abs{\ell\cap P_i}\right)^2}{m}-\sum_{i=1}^M \abs{\ell\cap P_i}\right).
\end{equation}
If $\ell\in L_{cell}$, then (\ref{estimate1}) guarantees that
\[\sum_{i=1}^M\abs{\ell\cap P_i}\ge\frac{r}{2}.\]
For such $\ell$, (\ref{estimate2}) becomes
\[k_\ell\ge\frac{r}{4}\left(\frac{r}{2m}-1\right) = \frac{r^2-2mr}{8m}.\]
Since $m < \frac{r}{4}$, it follows that
\[k_\ell\ge \frac{r^2-r^2/2}{8m}\ge \frac{r^2}{16m}\]
when $r$ is large enough.

Every $\ell\in L_{cell}$ is counted with multiplicity $k_\ell$ in (\ref{estimate0}).  Thus
\begin{equation}\label{estimate3}
\sum_{i=1}^M\binom{\abs{P_i}}{2}\ge\sum_{\ell\in L_{cell}} k_\ell\ge\abs{L_{cell}}\frac{r^2}{16m}.\end{equation}
We know that $M=O(m^d)$ and $\abs{P_i} = O\left(\frac{n}{m^d}\right)$, so we can rewrite (\ref{estimate3}) as
\[\abs{L_{cell}} = \frac{16m}{r^2}O\left(m^d\frac{n^2}{m^{2d}}\right) = \frac{1}{r^2}O\left(\frac{n^2}{m^{d-1}}\right).\]
Since $\frac{r}{8} < m$, this last equation becomes
\[\abs{L_{cell}} = O\left(\frac{n^2}{r^{d+1}}\right).\]
The set of lines in $L$ which are contained in the zero locus of $f$ has size
\[\abs{L} - \abs{L_{cell}}\ge K\frac{n^2}{r^{d+1}} - \abs{L_{cell}},\]
and so we can choose $K$ large enough so as to ensure that this last quantity is bounded below by $4\frac{n^2}{r^{d+1}}$.
\end{proof}

As an easy corollary we prove Theorem \ref{thm:main}.  In order to do so, we use the following standard graph theoretic lemma which can also be found in the paper of Dvir and Gopi.
\begin{Lemma}[Lemma 2.8, \cite{DviGop}]
\label{lem:ave}
Let $G=(A\sqcup B,E)$ be a bipartite graph with a non-empty edge set $E\subset A\times B$.  Then there exist non-empty subsets $A'\subset A$ and $B'\subset B$ such that if we consider the induced subgraph $G'=(A'\sqcup B',E')$, then
\begin{itemize}
\item The minimum degree in $A'$ is at least $\frac{\abs{E}}{4\abs{A}}$,
\item The minimum degree in $B'$ is at least $\frac{\abs{E}}{4\abs{B}}$,
\item $\abs{E'}\geq \abs{E}/2$.
\end{itemize}
\end{Lemma}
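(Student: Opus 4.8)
The plan is to prove the lemma by a standard iterative pruning argument: repeatedly discard low-degree vertices until every surviving vertex meets its required threshold, all while keeping control of the total number of edges lost. Write $e=\abs{E}$, $a=\abs{A}$, $b=\abs{B}$, and fix the two thresholds $\frac{e}{4a}$ (for the $A$-side) and $\frac{e}{4b}$ (for the $B$-side). The key point to keep in mind is that these thresholds are constants determined by the \emph{original} graph and stay fixed throughout the process.

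First I would run the following cleaning procedure. As long as the current graph contains a vertex of $A$ whose current degree is strictly less than $\frac{e}{4a}$, or a vertex of $B$ whose current degree is strictly less than $\frac{e}{4b}$, delete one such vertex together with all edges incident to it. Since each step removes a vertex and the vertex sets are finite, the procedure terminates after finitely many steps, producing subsets $A'\subseteq A$ and $B'\subseteq B$ together with the induced edge set $E'$. By the stopping condition, at termination every vertex of $A'$ has degree at least $\frac{e}{4a}$ in $G'$ and every vertex of $B'$ has degree at least $\frac{e}{4b}$, which is exactly the first two conclusions.

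The heart of the argument is to bound the number of deleted edges, and this is the one point that requires care, since deleting a vertex on one side lowers degrees on the other and so the degrees change dynamically. The right move is to bound the loss not by tracking the evolving degrees but by charging each deleted edge to the vertex whose deletion removed it, and then counting vertices against the fixed thresholds. Each deletion on the $A$-side removes fewer than $\frac{e}{4a}$ edges, and at most $a$ vertices can ever be deleted from $A$, so the $A$-side deletions cost at most $a\cdot\frac{e}{4a}=\frac{e}{4}$ edges; symmetrically the $B$-side deletions cost at most $\frac{e}{4}$. Hence at most $\frac{e}{2}$ edges are removed in total, giving $\abs{E'}\ge e-\frac{e}{2}=\frac{e}{2}$, which is the third conclusion.

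Finally, since $E$ is non-empty we have $\abs{E'}\ge\frac{e}{2}>0$, so $E'$ contains at least one edge and therefore $A'$ and $B'$ are both non-empty, completing the proof. I do not anticipate any genuine obstacle: the entire content is the edge-counting in the previous paragraph, where the essential observation is that each side contributes at most (number of its vertices)$\,\times\,$(its threshold), independently of the order in which deletions are performed.
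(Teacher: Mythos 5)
Your proof is correct, and it is the standard iterative-pruning argument; the paper itself does not prove this lemma but simply cites it as Lemma 2.8 of \cite{DviGop}, where essentially the same deletion-and-charging argument appears. The one delicate point --- charging each deleted edge to the fixed original thresholds $\frac{\abs{E}}{4\abs{A}}$ and $\frac{\abs{E}}{4\abs{B}}$ rather than to the evolving degrees --- is handled correctly, and the non-emptiness of $A'$ and $B'$ is properly deduced from $\abs{E'}\ge\abs{E}/2>0$.
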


We are ready to prove the theorem.

\begin{Theorem}
Let $P$ be a set of $n$ points in $\R^d$, and let $L$ be a set of lines so that each line contains at least $r$ points of $P$. There are constants $K$ and $N$, dependent only on $d$, so that if 
\[\abs{L}\geq K\frac{n^2}{r^{d+1}},\]
then there exists a hyperplane containing $N\frac{n}{r^{d-1}}$ points of $P$.
\end{Theorem}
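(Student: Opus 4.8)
The plan is to feed the hypersurface produced by Theorem~\ref{thm:lines} into a ``tangent hyperplane'' argument, after first using Lemma~\ref{lem:ave} to regularize the incidence structure. First I would apply Theorem~\ref{thm:lines} to obtain a set $L'\subseteq L$ with $\abs{L'}\ge 4\frac{n^2}{r^{d+1}}$ lines, all lying in a single hypersurface of degree at most $\frac r4$; in particular every line of $L'$, together with all of its points of $P$, sits inside this hypersurface. I would then form the bipartite incidence graph between $L'$ and the points of $P$ lying on these lines, noting that its edge set satisfies $\abs{E}\ge r\abs{L'}$ because every line is $r$-rich. Applying Lemma~\ref{lem:ave} yields subsets $L''\subseteq L'$ and $P''\subseteq P$ such that each line of $L''$ carries at least $\frac{\abs{E}}{4\abs{L'}}\ge\frac r4$ points of $P''$, while each point of $P''$ lies on at least $\frac{\abs{E}}{4n}\ge\frac{n}{r^d}$ lines of $L''$.

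The heart of the argument is to locate a single point of $P''$ at which the relevant hypersurface is smooth. To arrange this I would \emph{not} reuse the hypersurface from Theorem~\ref{thm:lines}, but instead choose $Z=\{f=0\}$ to be a hypersurface of \emph{minimal} degree $D$ among all hypersurfaces containing every line of $L''$; minimality forces $D\le\frac r4$, since the hypersurface from Theorem~\ref{thm:lines} is a competitor. Suppose for contradiction that every point of $P''$ were a singular point of $Z$, so that it lies on $\{\partial_i f=0\}$ for every $i$. Fixing an index with $\partial_i f\not\equiv 0$, each line of $L''$ would then meet the hypersurface $\{\partial_i f=0\}$, of degree at most $D-1$, in at least $\frac r4>D-1$ points, so by B\'ezout the entire line would lie in $\{\partial_i f=0\}$. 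This would exhibit a hypersurface of degree $D-1$ containing all of $L''$, contradicting the minimality of $D$. Hence some $p\in P''$ is a smooth point of $Z$.

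With such a point in hand the conclusion is immediate. Every line of $L''$ through $p$ lies in $Z$ and passes through the smooth point $p$, hence is tangent to $Z$ there and is therefore contained in the affine tangent hyperplane $H=\{x\col\nabla f(p)\cdot(x-p)=0\}$. There are at least $\frac{n}{r^d}$ such lines; they are pairwise distinct, so they meet only at $p$, and each carries at least $\frac r4$ points of $P''$. Counting the points of $P$ lying on these concurrent lines gives at least $1+\frac{n}{r^d}\bigl(\frac r4-1\bigr)\ge N\frac{n}{r^{d-1}}$ points of $P$ inside $H$ (for a suitable constant $N$, taking $r$ large as in the previous theorem), which is the required hyperplane.

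I expect the main obstacle to be precisely the step where a naive approach fails: if one simply keeps the hypersurface from Theorem~\ref{thm:lines}, the pruned set $P''$ can consist entirely of singular points, and descending into the singular locus is problematic. Doing so either discards a constant fraction of the lines at each of up to $\frac r4$ successive degree reductions, which is fatal, or else forces the defining equations of the iterated singular loci to grow in degree past $\frac r2$, which breaks the B\'ezout step for $d\ge 3$. The minimal-degree choice of $Z$ is what sidesteps both difficulties by producing a smooth point in one stroke, and it is worth observing that the constant $\frac r4$ in Theorem~\ref{thm:lines} is exactly what keeps $D-1$ strictly below the per-line point count $\frac r4$ guaranteed by Lemma~\ref{lem:ave}.
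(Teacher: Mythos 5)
Your proposal is correct and follows essentially the same route as the paper: apply Theorem~\ref{thm:lines}, regularize with Lemma~\ref{lem:ave}, take a minimal-degree hypersurface containing the surviving lines, use the gradient/B\'ezout contradiction to find a good point, and count points on the concurrent lines in a hyperplane through it. The only cosmetic difference is that you phrase the key step as finding a \emph{smooth} point of $Z$ and using its tangent hyperplane, whereas the paper finds a point that is not a \emph{joint} and uses any hyperplane containing the (non-spanning) directions through it; the underlying vanishing-gradient argument is identical.
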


\begin{proof}
We may use the previous theorem to conclude that if $K$ is large enough then there exists at least $4\frac{n^2}{r^{d+1}}$ lines contained in a degree $m<\frac{r}{4}$ hypersurface.  Let us denote the set of these lines by $L_Z$ and the set of points of $P$ on the lines of $L_Z$ by $P_Z$.  Each line of $L_Z$ is still $r$-riched, thus the total number of incidences between $L_Z$ and $P_Z$ satisfies
\[I(L_Z,P_Z)\ge r\abs{L_Z} = 4\frac{n^2}{r^{d}}.\]
By Lemma \ref{lem:ave} we may, after removing lines and points, therefore assume without loss of generality that each point of $P_Z$ is incident to at least $\frac{n}{r^{d}}$ lines in $L_Z$.

Let $g$ be a non-zero polynomial of minimum degree vanishing on $L_Z$.  We know that $f$ vanishes on $L_Z$, therefore the degree of $g$ is less than $r$.

Now, we call a point $p\in P_Z$ a {\it joint} if the directions of the lines in $L_Z$ incident to $p$ span $\R^d$.  If every $p\in P_Z$ is a joint, then surely the gradient of $g$ must vanish on all of $P_Z$.  Pick a component of the gradient which is non-zero on the vanishing 
locus of $g$.  This component vanishes on all the points in $P_Z$ and is of degree less than $r$.  Therefore, by B\'ezout's theorem, this component vanishes on all the lines in $L_Z$ as well, but the component is of smaller degree than of $g$ which is a contradiction.

Thus there must be a point $p\in P_Z$ which is not a joint, whence all the lines of $L_Z$ going through $p$ lie in the same hyperplane.  We know that there are $\frac{n}{r^d}$ lines going through $p$, and on each such line there are $r-1$ other points, implying that there are at least
\[(r-1)\frac{n}{r^d}+1 = \Omega\left(\frac{n}{r^{d-1}}\right)\]
points in one hyperplane. 
\end{proof}

\end{document}